\long\def\remove#1{}
\newtheorem{theorem}{Theorem}[section] 
\newtheorem{obs}[theorem]{Observation}
\newtheorem{definition}[theorem]{Definition}
\newtheorem{proposition}[theorem]{Proposition}
\newenvironment{proof}{{\em Proof:}}{\hfill{\hfill\rule{2mm}{2mm}}}
\newcommand {\mm}[1] {\ifmmode{#1}\else{\mbox{\(#1\)}}\fi}
\newcommand{\img}{\mathrm img}
\newcommand{\supp}{\mathrm supp}
\newcommand{\coker} {\mathrm coker}
\newcommand{\rank}                {\mm {\rm rank}}
\newcommand{\cancel}[1]
\begin{document}

\title {"Barcodes" for continuous maps and a brief introduction to Alternative  Morse Theory}

\author{
Dan Burghelea  \thanks{
Department of Mathematics,
The Ohio State University, Columbus, OH 43210,USA.
Email: {\tt burghele@math.ohio-state.edu}}
}
\date{}

\maketitle

\hskip 1in {\it Dedicated to Valentin Poenaru  
for his 90-th anniversary}

\begin{abstract}

This paper reviews the description of "bar codes" for a continuous real-valued map  $f :X\to \mathbb R$ and explains how to recover the Morse complex of a Morse function from them. In this presentation the bar codes appear as the support of two  vector-space valued maps, one defined on the Euclidean plane $\mathbb R^2$ and the other on the "above diagonal" half plane $\mathbb R^2_+.$  
\vskip .1in
\end{abstract} 

\setcounter{tocdepth}{1}

\section {Introduction}

Classical Morse Theory (C.M.T) considers smooth functions $f: M\to \mathbb R$ on  smooth manifolds $M$ whose all critical points are non-degenerate (generic smooth functions)  and, under some conditions, relates them  to the homology of the underlying manifold.   This is done by providing a family of chain complexes, each associated to some additional data, but all isomorphic \footnote {when the chain complex is a complex of $\kappa-$vector spaces,  as it will be the case in this paper,  the unicity will  follow  from calculations; when the chain complex is of modules over an arbitrary ring this was established in \cite {CR} } and therefore called {\it Morse complex} which calculate the homology of the manifold.
The Morse complex explains the relations between the number $c_r$ of critical points of the  index $r$ and the dimension $\beta_r$ of the $r-$homology vector spaces  (the $r-$Betti numbers) and detects existence of instantons (e.g. isolated trajectories between rest points) for a vector field which admits the function $f$ as Lyapunov function. The Morse complex is also equipped with an $\mathbb R-$ filtration by sub complexes which calculate the homology of the piece $f^{-1} ((-\infty, t]).$ This filtration is locally constant in $t$ outside the critical values  of $f.$ 
The elementary Morse theory, as summarily reviewed in Section 3, has  extensions to various type  of smooth infinite dimensional manifolds and smooth Whitney stratified spaces and  to the case when the set  of non degenerate critical points consists of submanifolds rather than points.    
\vskip .2in

Alternative Morse Theory (A.M.T) begins with a continuous map $f: X\to \mathbb R$  and refines the set $CR(f)$ of critical values of $f$ \footnote {the values $t$ for which the homology of the level $f^{-1} (t)$ changes}, into a collection of four types of intervals which, in topological persistence homology theory \footnote {proposed in \cite {ELZ} further extended in \cite {CSM} and \cite {BD11}} are referred to as {\it barcodes}.  
      
In our work they  appear as the points in the support of two types of vector spaces-valued maps $\hat \delta^f_r$ and $\hat \gamma^f_r$ defined on $\mathbb R^2$ and $\mathbb R^2\setminus \Delta,$ 
where $\Delta$  denotes the diagonal in $\mathbb R^2,$ associated to  each $r\in \mathbb Z_{\geq 0}$ and field $\kappa.$ The multiplicity of each bar code is the dimension of the corresponding vector space, possibly infinite.
Since in this paper only the restriction of the map $\hat\gamma^f_r$ to the $\mathbb R^2_+:=\{x,y\in \mathbb R^2 \mid x<y\}$ will appear,  one denotes  this restriction by ${^+ \hat\gamma}^f_r.$
The restriction to the below diagonal half plane, $\mathbb R^2_-:= \{x,y\in \mathbb R^2 \mid x<y\},$  denoted in \cite{Bu3} by ${^-\hat \gamma}^f_r,$ can be derived from ${^+ \hat\gamma}_r$ by the formula 
${^- \hat\gamma}^f_r(a,b)= {^+ \hat\gamma}^{-f}_r (-b,-a).$

These maps (actually the points of their supports with their multiplicity) permit to define a chain complex of $\kappa-$vector spaces.  
This chain complex is also equipped  with an $\mathbb R-$filtration determined  by the critical values and when considered for  a Morse  function  is isomorphic to the Morse complex with its filtration. 
In particular, as in the case of a Morse function $f$, from this complex, determined by bar codes, one can recognize  the number of critical points corresponding to each critical value,  the homology of the manifold,  of the sub-levels $f^{-1}((-\infty, t]),$ of the levels $f^{-1} (t)$ etc. and one detects presence of instantons;  all these    
in a considerably larger class of situations than in the C.M.T.  

 A key merit of the theory is that the numerical invariants which appear are {\it computer friendly}, i.e. when considered for nice spaces (for example finite simplicial complexes and simplicial maps)  they can be calculated by implementable algorithms. cf \cite{Bu}.

The purpose of this paper is to provide the definition of the maps $\hat \delta^f_r,$  ${^+ \hat \gamma}^f_r,$   ${^+ \hat \mu}^f_r,$  ${^+ \hat \omega}^f_r$and   ${^+ \hat \lambda}^f_r$ for an arbitrary continuous map, relate them to the homology of the underlying space when $f$ is tame  ,
and present the arguments to establish the isomorphism of the chain complex defined  by $\hat \delta^f_r,$ ${^+ \hat \gamma}^f_r$ and   ${^+ \hat \mu}^f_r$ with the Morse complex when associated to a Morse function, however in this paper we do this only in the case that the involved chain complexes are of finite dimension vector spaces, in particular for the subcomplexes describing the $\mathbb R-$filtration.
 The concluded isomorphism is not canonical. The existence of an isomorphism compatible with the filtration  is algebraically  more subtle and will be discussed in subsequent work.

The paper  begins with a few observation about chain complexes in Section 2,  and a brief review of elementary Morse theory in Section 3.
Section 4 provides the definitions of the maps $\hat \delta_r^f,$ ${^+ \hat \gamma}^f_r,$ ${^+ \hat \mu}^f_r,$  ${^+ \hat \omega}^f_r$and   ${^+ \hat \lambda}^f_r$ and of the associated chain complex $(C^{\delta, \gamma, \mu}_\ast,  \partial^{\delta, \gamma, \mu}_\ast)$ and of its $\mathbb R-$filtration  
$(C^{\delta, \gamma,\mu}(t)_\ast. \partial^{\delta, \gamma, \mu}_\ast)$ and  formulates the precise relation between the homology and the vector space-valued maps, cf Theorem \ref{T44}.

Section 5 collects results needed to establish the non canonical isomorphism  of $(C^{\delta, \gamma, \mu}(t)_\ast. \partial^{\delta, \gamma, \mu}_\ast )$
 with the $t-$subcomplex of the Morse complex. 
The theory can be extended to A.M-N.T (M-N.T stands for Morse-Novikov theory cf  \cite {Bu2}).

{\bf Note}
In 1961-62, as a second year undergraduate student I met Valentin Poenaru at that time a young and charismatic researcher at the Mathematical Institute of the Romanian Academy in Bucharest. In view of my interest in topology he has invited me  to attend his seminar in differential topology; at that time the seminar discussed Morse Theory. Despite our rather short intersection in Bucharest (he soon left Romania) directly, through  his seminar,  or may be indirectly, by  his reputation, he has much encouraged  my wish  to become a topologist.
I am pleased to dedicate to him this paper, containing a few considerations about, or at least related to Morse theory, a topic in topology, which has remained directly or indirectly in the back of much of my mathematics and turns out to have more and more relevance outside topology. 

\vskip  .2in

\section {Chain complexes of $\kappa-$vector spaces}
For a chain complex  of $\kappa-$vector spaces 
$$(C_\ast, \partial _\ast)\equiv \{ \xymatrix{\cdots\ar[r]& C_{n+1}\ar[r]^{\partial _{n+1}} & C_n\ar[r]^{\partial _n}& C_{n-1} \ar[r] &\cdots}, \  \ \partial _{k-1}\cdot \partial _k=0\}$$   
a Hodge decomposition is provided by a decomposition of each $C_n$ 
$$C_n= C_n^- \oplus H_n\oplus C_n^+$$ s.t. $\partial_n: C_n\to C_{n-1}$ is described  as the matrix  
$$\partial _n= \begin{bmatrix}
0&0&0\\
0&0&0\\
\underline \partial_n&0&0
\end{bmatrix}
$$
with $\underline \partial_n$ an isomorphism; clearly $H_n( C_\ast, \partial_\ast) = H_n.$
Such decomposition exists and  is unique up to a non canonical isomorphism. When $C_n$ are finite dimensional  one writes $c_n= \dim C_n,$   $\beta_n= \dim H_n$  and $\rho_n= \rank \ \partial _n$
and these numbers are related by $$c_n= \beta_n+ \rho_n +\rho_{n-1}.$$

The existence of Hodge decompositions and the equality above implies : 
\begin{obs}\label {O21}\ 
Two of these three sets of numerical invariants determine up to  a non-canonical isomorphism a chain complex of finite dimensional vector spaces. 
\end{obs}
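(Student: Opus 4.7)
The plan is to reduce the observation to showing that a chain complex of finite-dimensional vector spaces is determined, up to non-canonical isomorphism, by its full triple of invariants $(c_n, \beta_n, \rho_n)$. Given this, the linear relation $c_n = \beta_n + \rho_n + \rho_{n-1}$ recovers any one of the three sequences from the other two: the pairs $(\{\rho_n\}, \{\beta_n\})$ and $(\{c_n\}, \{\rho_n\})$ give the missing sequence trivially, and the pair $(\{c_n\}, \{\beta_n\})$ gives $\{\rho_n\}$ by induction, starting from $\rho_n = 0$ outside the bounded range on which the complex is nontrivial.

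For existence of a complex with prescribed invariants, I would construct a model directly. Pick vector spaces $C_n^-$, $H_n$, $C_n^+$ of dimensions $\rho_n$, $\beta_n$, $\rho_{n-1}$; set $C_n := C_n^- \oplus H_n \oplus C_n^+$; and choose any linear isomorphism $\underline{\partial}_n : C_n^- \to C_{n-1}^+$, which exists because source and target share dimension $\rho_n$. Taking $\partial_n$ to be the block matrix displayed in the excerpt yields a bona fide chain complex, since the image of $\partial_n$ sits in $C_{n-1}^+$ on which $\partial_{n-1}$ vanishes; the prescribed invariants then match the ones computed from the construction by inspection.

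For uniqueness, given two complexes $(C_\ast, \partial_\ast)$ and $(C'_\ast, \partial'_\ast)$ realising the same triple, I would invoke the existence of Hodge decompositions for both, as granted in the excerpt. Choose arbitrary linear isomorphisms $\phi_n^H : H_n \to H'_n$ and $\phi_n^- : C_n^- \to (C'_n)^-$---available because the matching summands have equal dimension---and let $\phi_n^+ : C_n^+ \to (C'_n)^+$ be forced by the chain-map condition, namely $\phi_n^+ := \underline{\partial}'_{n+1} \circ \phi_{n+1}^- \circ (\underline{\partial}_{n+1})^{-1}$. Being a composition of three isomorphisms, $\phi_n^+$ is itself an isomorphism, and the direct sum $\phi_n := \phi_n^- \oplus \phi_n^H \oplus \phi_n^+$ commutes with the boundaries by construction.

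The main subtle point---and the place where the argument is most delicate---is the interplay between the chain-map condition and the freedom in choosing the component maps: the condition forces $\phi_n^+$ in terms of $\phi_{n+1}^-$, but since $\phi_n^-$ remains a free choice, no circular constraint arises. This also makes transparent why the resulting isomorphism is necessarily non-canonical, since the Hodge decompositions themselves and the auxiliary choices of $\phi_n^\pm, \phi_n^H$ all involve non-canonical freedom.
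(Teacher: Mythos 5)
Your approach is essentially the ``straightforward'' one the paper alludes to (the paper gives no proof, merely defers to \cite{Bu2}, Section 8): invoke the granted existence of Hodge decompositions, read off the linear relation, construct a model complex for existence, and glue together isomorphisms of the Hodge summands for uniqueness. The uniqueness argument is correct as written --- checking the chain-map condition on each of the three summands of $C_n$ confirms that setting $\phi_n^+ := \underline{\partial}'_{n+1}\circ\phi_{n+1}^-\circ(\underline{\partial}_{n+1})^{-1}$ with $\phi_n^-$ and $\phi_n^H$ chosen freely does yield a chain isomorphism --- and the reduction to ``two of three determine the third'' via $(c,\beta)\Rightarrow\rho$ by induction is fine once one assumes, as is implicit in the finite-dimensional setting here, that $\rho_n=0$ for $n$ sufficiently small or large.

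There is, however, a genuine bookkeeping slip in the existence construction. You set $\dim C_n^- = \rho_n$ and $\dim C_n^+ = \rho_{n-1}$, and then assert that $\underline{\partial}_n : C_n^-\to C_{n-1}^+$ has source and target of equal dimension $\rho_n$. But by your own assignment $\dim C_{n-1}^+ = \rho_{(n-1)-1} = \rho_{n-2}$, so the dimensions of source and target are $\rho_n$ and $\rho_{n-2}$, which are not equal in general, and no isomorphism $\underline{\partial}_n$ exists. The dimension constraint imposed by the Hodge block form is $\dim C_n^- = \dim C_{n-1}^+$, so a consistent choice is either $\dim C_n^- = \rho_n$, $\dim C_n^+ = \rho_{n+1}$ (with $\rho_n = \rank\partial_n$ in the literal sense, whence $c_n=\beta_n+\rho_n+\rho_{n+1}$), or $\dim C_n^- = \rho_{n-1}$, $\dim C_n^+ = \rho_n$ (taking $\rho_n$ as the rank of the boundary landing in degree $n$, which is what the paper's displayed identity $c_n=\beta_n+\rho_n+\rho_{n-1}$ actually encodes). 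Your mixed assignment belongs to neither convention. The fix is a one-line index change and does not affect the structure of the argument, but as written the claimed isomorphism does not exist.
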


The proof is straightforward. For more details, if necessary, see \cite{Bu2} section 8.

\vskip .1in
An $\mathbb R-$filtration of $(C_\ast, \partial _\ast)$ consists of a family of subcomplexes $(C_\ast(t), \partial _\ast) \subset (C_\ast, \partial _\ast),$ i.e.$\partial _\ast (C_\ast(t) \subset C_{\ast-1}(t),$  indexed by $t\in \mathbb R$ s.t. 
$(C_\ast(t), \partial _\ast) \subseteq (C_\ast(t'), \partial _\ast )$ for  $t<t' $ and $\bigcup_t (C_\ast(t), \partial _\ast)= (C_\ast, \partial _\ast).$  The complex $(C_\ast(t), \partial _\ast)$ is referred to as the $t-$filtration
or the $t-$subcomplex of $(C_\ast, \partial _\ast).$

\section{ Classical Morse theory}

Morse theory for finite dimensional smooth manifolds considers proper smooth maps  bounded from below with all critical points non degenerate \footnote { a critical points $x\in  Cr(f):=\{ x\in M \mid  df(x)=0\}$ is non degenerate if 
the Hessian of $f$ at $x,$ i.e. $\partial ^2 f/ \partial x_i, \partial x_j (x),$ in a coordinate system (ant then in any) is a non degenerate quadratic form; the index of $x$ is ihe number of negative eigenvalues of the Hessian}, 
 
 To such function and to an arbitrary field $\kappa$ one associates a collection of  chain complexes of finite dimensional $\kappa-$ vector spaces equipped with an $\mathbb R-$filtration, locally constant for $t\in \mathbb R\setminus CR(f),$  which calculates the homology of $M$ and of $f^{-1} ((-\infty, t])$  
 whose  components $C_n$ resp. $C_n(t)$ are the $\kappa-$vector spaces generated by the critical points of index $n$ resp. the critical points of index $n$ of  critical values smaller or equal to $t$. 
 The boundary maps $\partial_n: C_n\to C_{n-1}$ depends on additional data. but different additional data provide isomorphic complexes. 
 Any such complex will be named {\it Morse complex of $f$}.
 \vskip .1in
 
 The additional data  contains  a smooth vector field  $X$ which has $f$ as good Lyapunov function,  in Milnor terminology a {\it gradient like vector field}, cf. \cite {Mi};  precisely,  
  $X(f) (x) <0$  iff $x\in M\setminus  Cr(f)$ with $Cr(f)= \{x\in M \mid df(x)=0\}$ and for any $x\in Cr(f)$ one can find a chart $\varphi_x: (U_x, x)\to (\mathbb R^n, 0),$ $U_x$ open neighborhood  of $x$ in $M,$ s.t. 
  
  \begin{equation*}
 \begin{aligned}
  f\cdot \varphi_x^{-1} (x_1, \cdots x_n)=& -1/2 \sum _{i\leq k} x^2_i + 1/2 \sum _{i \geq k+1} x^2_i\\  
  \varphi _x ^\ast X= &  \sum _{i\leq k} x_i \partial _{x_i}- \sum _{i\geq k+1} x_i \partial _{x_i}.
  \end{aligned}
  \end{equation*}
  Equivalently,  $X= -grad_g f$  for $g$  a complete Riemannian metric on $M$ which in the neighborhood
  of each critical point is flat.
\vskip .1in

For a smooth vector field $X$ on a smooth manifold  $M^n$  call  {\it rest point} a point $x\in M$  s.t. $X(x)=0$ and {\it trajectory}  a smooth map $\gamma : \mathbb R\supset U\to M$  which satisfies $d \gamma(t)/ d t= X(\gamma(t)).$   

Denote by $\gamma_y$ the maximal trajectory with $\gamma_y(0)=y$, $y\in M,$  and define  
$$W_x^\mp:= \{ y\in M\mid \lim_{t\to\mp \infty} \gamma_y(t)= x\},$$ the unstable (-) resp. stable (+) set of the rest point $x.$ If the vector field is as above then these sets are actually submanifolds diffeomorphic to the euclidean space of dimension  $ index (x)$ resp. $ n- index (x).$

An  {\it additional data } consists of a gradient like vector field  $X$ which has the unstable manifolds transversal  to the stable manifolds and has $f$ as a good Lyapunov function plus a collection  $\mathcal O= \{o_x, x\in Cr(f)\}$ of orientations $o_x$ for any unstable manifold $W^-_x.$ 
 Gradient like vector fields exist, and any vector field with hyperbolic rest points  having $f$ as Lyapunov function  can be perturbed arbitrary little on arbitrary small neighborhood of the rest points,  but still remaining the same in a smaller neighborhood of these points, to get the stable and unstable manifolds transversal cf. \cite {Sm} or \cite {BFK}.

If $x$ is a critical point of index $k$ and $y$ a critical point of index $(k-1)$ then $W^-_x\cap W^+_y$ is a union of components $\gamma,$ each submanifold of dimension one called {\it instanton} (= isolated trajectory between rest points).  

The orientation $o_x$ compared to the orientation $ o_y$  followed by the orientation from  $x$ to $y$   along the instanton $\gamma,$ provides a sign, $\epsilon (\gamma)= \pm 1,$ and then one defines the {\it algebraic cardinality} $I(x,y)= \sum _\gamma \epsilon (\gamma)$ of the set of instantons from $x$ to $y$.

The additional data $(X, \mathcal O)$ provides the vector spaces $C^{X, \mathcal O}_k$ generated by rest points of $X,$ the same as  the critical points of $f$ of index $k,$ hence equipped with a base.
The  subspace $C^{f, X, \mathcal O}_k(t)$ is generated by the rest  points $x$ with $f(x)\leq t$ and the linear map  $\partial^{X, \mathcal O}_k: C^{X, \mathcal O}_k\to C^{X, \mathcal O}_{k-1}$  is given by the matrix with entries $I(x,y).$ 

The main theorem of elementary Morse theory claims that if $f$ is a proper smooth function bounded from below with all critical points non degenerate and $(X,\mathcal O)$ is an additional data, then  
these vector spaces and linear maps described above define a chain complex $(C^{X,\mathcal O}_\ast,\partial ^{X,\mathcal O}_\ast)$ with $\mathbb R-$filtration $(C^{X,\mathcal O}_\ast(t), \partial^{X, \mathcal O} _\ast)$ 
which calculates the homology  of $X$ and of $f^{-1} ((-\infty,t]).$ The $t-$filtration  subcomplex has all vector spaces of finite dimension. 
This implies the famous Morse inequalities, the independence of additional data up to a non canonical isomorphism, i.e. the complexes derived from different additional data are isomorphic, cf. \cite {CR}, 
 as well as the fact that $\rank\ \partial _k\ne 0$ implies existence of instantons for any vector field having $f$ as Lyapunov function. 

We refer to any of these complexes  $(C_\ast ^{ X, \mathcal O}, \partial ^{X, \mathcal O}_\ast)$ and the sub complexes  $(C_\ast ^{f, X, \mathcal O}(t), \partial ^{X, \mathcal O}_\ast)$ as Morse complexes. The A.M.T as proposed has an extension to A.M-N.T (Alternative Morse-Novikov theory)  but this is not discussed in this paper. 

\section {The maps $\hat \delta^f_r,$  $^+\hat \gamma^f_r$ and $^+\mu^f_r,  ^+\lambda^f_r, ^+\omega^f_r $ and the associated chain complex  $(C^{\delta,\gamma,\mu}_\ast, \partial ^{\delta, \gamma, \mu}_\ast)$ } 

{\bf The maps $\hat \delta^f_r, ^+\hat \mu^f_r, ^+\hat \gamma^f_r,  ^+\hat \lambda^f_r, ^+\hat \omega^f_r $} 
\vskip .1in
Let   $f: X\to \mathbb R$, be a  continuous real-valued map and let $H_\ast$  denote the singular homology \footnote {or any other homology theory which satisfies the Eilenberg-Steenrod axioms and commutes with the direct limits} 
 with coefficients in a fixed field $\kappa.$

 \noindent For any $a\in \mathbb R$ denote by
 
 \begin{equation}
 \begin{aligned}
X^f_a:= f^{-1} ((-\infty,a]), & \ X_f^a:= f^{-1} ([a, \infty))\\
X^f_{<a}:= f^{-1} ((-\infty,a)),& \ X_f^{>a}:= f^{-1} ((a, \infty)).
\end{aligned}
 \end{equation}  
The real number  $a\in \mathbb R$  is called  {\it regular value } 
if  $$H_\ast(X^f_a, X^f_{<a}) \oplus H_\ast(X_f^a, X_f^{>a})
= 0$$ and {\it critical value} if not regular. 
Denote the set of critical values by $CR(f).$

For any $a\in \mathbb R$ denote by 
\begin{equation} \label{E2}
 \begin{aligned}\
\mathbb I^f_a(r):= \img (H_r(X^f_a) \to H_r(X)), &\  \mathbb I_f^a(r): =\img (H_r(X_f^a) \to H_r(X))\\
\mathbb I^f_{<a}(r):= \img (H_r(X^f_{<a}) \to H_r(X)), &\  \mathbb I_f^{>a}(r): =\img (H_r(X_f^{>a}) \to H_r(X))
\end{aligned}
 \end{equation}  
with the arrows representing the inclusion induced linear maps. 

Note that $H_r(X_{<a})= \varinjlim_{\epsilon \to 0}H_r(X_{a-\epsilon})$
and then  $\mathbb I_{<a} (r)= \varinjlim_{\epsilon \to 0}\mathbb I_{a-\epsilon}(r).$ 
Similarly $H_r(X^{>a})= \varinjlim_{\epsilon \to 0} H_r(X^{a+\epsilon})$ and then  $\mathbb I^{>a}(r)= \varinjlim_{\epsilon \to 0}\mathbb I^{a + \epsilon}(r).$
\vskip .2in
 
For any $a,b \in  {\mathbb R}$ let 
\begin{equation}
\begin{aligned}
\mathbb F^f_r(a,b):= &\mathbb I^f_a(r)\cap \mathbb I^b_f(r), \\
 \mathbb F^f_r (<a,b):= &\mathbb I^f_{<a}(r)\cap \mathbb I^b_f(r),\\
  \mathbb F^f_r(a,>b):= &\mathbb I^f_a(r)\cap \mathbb I^{>b}_f(r),
\end{aligned}
\end{equation}
and define: 
\begin{equation}\label {E4}                     
\boxed{\hat\delta^f_r(a,b):= \mathbb F_r(a,b) / (\mathbb F_r(<a,b) + \mathbb F_r(a, >b))}.\end{equation}                                                                                                                                                                                                                 
 
 Let $$\delta^f_r(a, b):= \dim \hat\delta^f_r(a,b) \in   \mathbb Z_{\geq 0}\cup \infty .$$
\vskip .2in
We also introduce 
\begin{equation}\label {E5}
\begin{aligned}
\mathbb I^f_\infty(r):= &\cup _{a\in \mathbb R} \mathbb I^f_a(r)= H_r(X),\\
\mathbb I_f^{-\infty}(r):=& \cup _{a\in \mathbb R} \mathbb I_f^a(r)= H_r(X),\\
\mathbb I^f_{-\infty}(r):= &\cap _{a\in \mathbb R} \mathbb I^f_a(r),\\
\mathbb I_f^{\infty}(r):=& \cap _{a\in \mathbb R} \mathbb I^a_f(r)
\end{aligned}
\end{equation}
 and then define 
  \begin{equation}\label {E6}
\boxed{{^+\hat\mu}^f_r(a):= (\mathbb I^f_a(r)\cap \mathbb I_f^{\infty}(r)) / (\mathbb I^f_{<a}(r)\cap \mathbb I_f^{\infty}(r))}
\end{equation}
and $$^+\mu^f_r(a):=\dim
^+\hat\mu^f_r(a) \in   \mathbb Z_{\geq 0}\cup \infty .$$ 

\begin{obs}\

\noindent The assignment $\hat \delta_r$ defines the map 

 $\hat \delta: \mathbb R^2\rightsquigarrow \kappa-\rm{Vector\ spaces}$

\noindent and the assignment ${^+\hat\mu}_r$ defines the map 

${^+\hat \mu}_r: \mathbb R\rightsquigarrow \kappa-\rm{Vector\ spaces},$

\noindent which in view of the definition of regular / critical values have the supports contained in $CR(f)\times CR(f)$ and. $CR(f)$ resp..
If $f$ is bounded from above, hence $\varprojlim_{x\to \infty} \mathbb I_f^{x}(r)=0,$ then ${^+\hat\mu}^f_r (a)= 0$ for any $a\in \mathbb R.$ 
\end{obs}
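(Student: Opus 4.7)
The plan is to deduce all three assertions from a single intermediate fact: at a regular value $a$, the subspaces $\mathbb{I}_a^f(r), \mathbb{I}_{<a}^f(r) \subseteq H_r(X)$ coincide, and symmetrically $\mathbb{I}_f^a(r) = \mathbb{I}_f^{>a}(r)$. Once these two equalities are in hand, the support claims and the boundedness consequence are formal.

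First I would verify the equalities at a regular value. By definition, $a \notin CR(f)$ entails in particular that $H_\ast(X_a^f, X_{<a}^f) = 0$. Combined with the already-noted identification $H_r(X_{<a}^f) = \varinjlim_{\epsilon \to 0} H_r(X_{a-\epsilon}^f)$, the long exact sequence of the pair $(X_a^f, X_{<a}^f)$ forces the inclusion-induced map $H_r(X_{<a}^f) \to H_r(X_a^f)$ to be an isomorphism for every $r$. Post-composing with the inclusion $X_a^f \hookrightarrow X$ shows that the images inside $H_r(X)$ coincide, so $\mathbb{I}_{<a}^f(r) = \mathbb{I}_a^f(r)$; the symmetric argument applied to the pair $(X_f^a, X_f^{>a})$ gives $\mathbb{I}_f^{>a}(r) = \mathbb{I}_f^a(r)$.

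Next I would substitute these equalities directly into the definitions of $\hat\delta^f_r$ and ${^+\hat\mu}_r$. If $a \notin CR(f)$ then for every $b \in \mathbb{R}$ one has $\mathbb{F}^f_r(a,b) = \mathbb{I}_a^f(r) \cap \mathbb{I}_f^b(r) = \mathbb{I}_{<a}^f(r) \cap \mathbb{I}_f^b(r) = \mathbb{F}_r^f(<a, b)$, so the numerator of $\hat\delta^f_r(a,b)$ is already contained in the denominator and the quotient is $0$; a symmetric argument covers the case $b \notin CR(f)$, giving $\mathrm{supp}(\hat\delta^f_r) \subseteq CR(f) \times CR(f)$. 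For ${^+\hat\mu}_r$, when $a$ is regular each term $(\mathbb{I}_a(r) \cap \mathbb{I}^y(r))/(\mathbb{I}_{<a}(r) \cap \mathbb{I}^y(r))$ of the projective system is zero since numerator equals denominator, hence so is the inverse limit, yielding $\mathrm{supp}({^+\hat\mu}_r) \subseteq CR(f)$.

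Finally, suppose $f$ is bounded above by some $M$. Then $X_f^y = f^{-1}([y, \infty)) = \emptyset$ for every $y > M$, so $H_r(X_f^y) = 0$ and $\mathbb{I}_f^y(r) = 0$. The projective system $\{\mathbb{I}^y(r)\}$ is then eventually zero, whence $\varprojlim_{y \to \infty} \mathbb{I}^y(r) = 0$, and every subspace $\mathbb{I}_a(r) \cap \mathbb{I}^y(r)$ vanishes as soon as $y > M$, so ${^+\hat\mu}_r(a) = 0$ for every $a$. The only mildly technical point in the whole proof is the interchange of filtered colimits with the chosen homology in the first step, but this is exactly the direct-limit axiom built into the setup in the footnote (and is automatic for singular homology with field coefficients, since filtered colimits are exact).
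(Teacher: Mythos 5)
Your argument is correct and is exactly the one the paper tacitly invokes when it says ``in view of the definition of regular~/~critical values.'' The key observation — that $H_\ast(X^f_a, X^f_{<a}) = 0$ forces the inclusion-induced map $H_r(X^f_{<a}) \to H_r(X^f_a)$ to be an isomorphism, hence $\mathbb I^f_{<a}(r) = \mathbb I^f_a(r)$, and symmetrically $\mathbb I_f^{>a}(r) = \mathbb I_f^a(r)$ — is the right intermediate fact, and substituting it into the definitions of $\hat\delta^f_r$ and $^+\hat\mu_r$ collapses the numerator into the denominator in each case exactly as you describe. The final paragraph on $f$ bounded above ($\mathbb I^y(r) = 0$ for $y$ past the bound, so every term of the projective system is zero) is likewise the intended reasoning. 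The only unnecessary aside is your invocation of the direct-limit axiom in the first step: since the definition of regular value already delivers $H_\ast(X^f_a, X^f_{<a}) = 0$ outright, the colimit identification of $H_r(X^f_{<a})$ is not actually used to establish the isomorphism, though mentioning it does no harm.
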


\vskip .3in

For $a, b \in \mathbb R$ let 
\begin{equation}\label {E7}
\begin{aligned}
\mathbb T^f_r(a,b):= &\ker ( H_r(X^f_a)\to H_r(X^f_b)) \  \ \rm {if} \ &a\leq b \\
 \mathbb T^f_r(<a,b):= &\ker ( H_r(X^f_{<a})\to H_r(X^f_b))\  \ \rm {if} \ &a\leq b\\
  \mathbb T^f_r(a,<b):= &\ker ( H_r(X^f_a)\to H_r(X^f_{<b}))\  \ \rm {if} \ &a<b 
  \end{aligned}
\end{equation}
and define:
\begin{equation}\label {E8}
\boxed{^+ \hat\gamma^f_r(a,b):= \mathbb T_r(a,b) / ( \iota \mathbb T_r(<a,b) + \mathbb T_r(a, <b))}, \end{equation}
with $$^+\gamma^f_r(a, b):= \dim \hat\gamma^f_r(a,b) \in   \mathbb Z_{\geq 0}\cup \infty $$
where $ \iota  : \mathbb T_r(<a,b)\to \mathbb T_r(a,b)$  is the inclusion induced linear map. 
When we want to specify the source and the target we write $^c \iota^b_a: \mathbb T_r(a,c)\to \mathbb T_r(b,c), a <b<c $ for the inclusion induced linear map $\iota.$ The map $\iota$ in the definition of $^+\hat \gamma^f_r$ above is actually $^b\iota_{<a}^a.$

For $y <a$ introduce  
 \begin{equation}\label {E9}
\boxed{^+\hat \lambda^f_r(a) := \cap _{y<a}\img ( ^a \iota_{y}^{<a} : \mathbb T_r(y, a)\to \mathbb T_r(<a, a)).}
\end{equation}
 and $$^+\lambda^f_r(a):= \dim ^+ \hat \lambda^f_r(a) \in   \mathbb Z_{\geq 0}\cup \infty .$$
 
  Note that $^a \iota_{y}^{<a} (\mathbb T_r(y, <a))=0$  \footnote {note that $\mathbb T_r(y,<a)\subseteq \mathbb T_r (y,a)$}, and  therefore 
 $$\img ( ^a \iota_{y}^{<a}:\mathbb T_r(y, a)/\mathbb T_r(y, <a)\to \mathbb T_r(<a, a))= \img ( ^a \iota_{y}^{<a} : \mathbb T_r(y, a)\to \mathbb T_r(<a, a)).$$
 \vskip.1in
 
 For $x>a$ observe that 
 $ \mathbb T_r((a,x) / ^x \iota^a_{<a} \mathbb T_r(<a,x) \subseteq \coker (H_r(X_a)\to H_r(X_a))$
 and then introduce 

 \begin{equation}\label {E10}
\boxed{^+\hat \omega^f_r(a) := \cap _{x>a} ( \mathbb T_r(a,x) / ^x \iota^a_{<a} \mathbb T_r(<a,x))}
\end{equation}
 and  $$^+\omega^f_r(a):= \dim ^+ \hat \omega^f_r(a) \in   \mathbb Z_{\geq 0}\cup \infty .$$

\begin{obs}\
\noindent The assignment $^+\hat \gamma^f_r$ defines the map 

$^+\hat \gamma^f_r: \mathbb R^2_+\rightsquigarrow \kappa-\rm { Vector\ spaces}$

\noindent and the assignments ${^+\hat \lambda}^f_r$  and ${^+\hat \omega}^f_r$ define the maps 

${^+\hat \lambda}^f_r: \mathbb R\rightsquigarrow \kappa-\rm { Vector\ spaces}$ and 

${^+\hat \omega}^f_r: \mathbb R\rightsquigarrow \kappa-\rm { Vector\ spaces}$ 

\noindent which in view of the definition of regular / critical values have the supports contained in $CR(f)\times CR(f)$ and $CR(f).$ 

If $f$ is bounded from below then ${^+\hat\lambda}^f_r (a)=0.$ If $f$ is homologically tame, cf Definition 4.4 below, then ${^+\hat \omega}^f_r (a)=0.$
\end{obs}

\vskip .2in 
{\bf Relationship with barcodes} 
\vskip .1in
The point $(a,b)\in \supp \delta ^f_r$ corresponds to what in \cite {Bu} is called an $r-$closed bar code $[a,b]$ of multiplicity $\dim \hat \delta^f_r(a,b)$ when $a\leq b$ and to an $(r-1)-$open bar code $(b,a)$ when $a>b$ 
\footnote {this because $a$ and $b$ should denote the ends of an interval} with the multiplicity $\dim \hat \delta_r(a,b)$.  Similarly the point $(a,b)$ with $a<b$ in $\supp ^+\gamma^f_r$ corresponds to what in \cite {Bu} is called an $r-$closed-open  bar code $[a,b)$ of  multiplicity $\dim {^+ \hat \gamma}^f_r(a,b).$

\vskip.2in
{\bf The chain complexes  $(C^{\delta,\gamma,\mu}_\ast, \partial ^{\delta, \gamma, \mu}_\ast)$ and $(C^{\delta,\gamma. \mu}_\ast(t) , \partial ^{\delta, \gamma, \mu}_\ast)$}
\vskip .1in
Define 
\begin{equation} \label {E11}
\begin{aligned}
C^-_n:=&\bigoplus _{\{(a,b)\in \mathbb R^2
\mid a<b\}} \ ^+ \hat \gamma^f_{n-1} (a,b)\\
H_n:=&\ \ \ \bigoplus _{\{(a,b)\in \mathbb R^2
\}} \  \  \ \hat \delta^f_n (a,b) \oplus \bigoplus _{\{a\in \mathbb R\}} \ ^+\hat \mu_n(a)\\
C^+_n:=&\ \bigoplus _{\{(a,b)\in \mathbb R^2
\mid a<b\}} \ ^+ \hat \gamma^f_{n} (a,b)\\
C_n:= &C_n^-\oplus H_n\oplus C^+_n
\end{aligned}
\end{equation}
and $\partial_n: C_n\to C_{n-1}$ by the matrix  
$$\partial _n= \begin{bmatrix}
0&0&0\\
0&0&0\\
id&0&0
\end{bmatrix}
\quad
$$

with the $t-$filtration provided by 
\begin{equation} \label{E12}
\begin{aligned} C_n(t):=
&\bigoplus _{\{ (a,b)\in CR(f)\times CR(f)\mid a<b \leq t \} } \  ^+{\hat{\gamma} }_{n-1} (a,b) \oplus \\
&\bigoplus_{\{(a,b)\in CR(f)\times CR(f) \mid  a \leq t\}} \hat \delta^f_n (a,b)
\oplus \bigoplus _{\{(a,b)\in CR(f)\times CR(f) \mid a \leq t <b\}}\  ^+ \hat \gamma^f_n(a,b)\ \  ) \oplus \bigoplus _{\{a\in CR(f)\mid a\leq t\}} \ ^+ \hat \mu _r(a)\oplus \\
 &\bigoplus _{\{ (a,b)\in CR(f)\times CR(f)
 \mid a<b \leq t\}}\  ^+\hat \gamma^f_n(a,b) .
\end{aligned}
\end{equation}
Clearly $\partial _n( C_n(t)) \subset C_{n-1} (t).$ 
 This  sub complex has the Hodge decomposition with the components  

\begin{equation}\label{E13}
\begin{aligned}
 C_n^- (t)= &\bigoplus _{\{ (a,b)\in CR(f)\times CR(f)
\mid a<b \leq t \} }  \ ^+{\hat \gamma}_{n-1} (a,b) \\
H_n(t)= &
 \bigoplus_{\{(a,b)\in CR(f)\times CR(f)
 \mid  a \leq t\}} \hat \delta^f_n (a,b)\oplus \bigoplus _{\{a\in \mathbb R\mid a\leq t\}} \ ^+ \hat \mu _r(a)
\oplus \bigoplus _{\{(a,b)\in CR(f)\times CR(f) \mid a \leq t <b\}}\  ^+ \hat \gamma_n(a,b)\\ 
C_n^+ (t)=&\bigoplus _{\{ (a,b)\in CR(f)\times CR(f) \mid a<b \leq t\}}\  ^+\hat \gamma^f_n(a,b).
\end{aligned}
\end{equation}

From now on one denotes  the above  complex (\ref {E11})  by $(C_\ast ^{\delta, \gamma,\mu}, \partial ^{\delta, \gamma,\mu}_\ast)$ and the $t-$filtration sub complex (\ref {E13}) by $(C^{\delta, \gamma, \mu}_\ast (t), \partial ^{\delta, \gamma, \mu}_\ast).$

\vskip .2in 

{ \bf  The case $H_r(X_a, X_{<a})$ is  finite dimensional}
\vskip .1in
Consider the obvious linear maps  

\begin{enumerate}
\item 
$\xymatrix{\mathbb F_r (a,b)/ \mathbb F_r( <a,b)\ar@{>->}[r] & \mathbb I_a(r)/ \mathbb I_{<a}(r) & \coker( H_r(X_{<a})\to H_r(X_a)) \ar@{->>}[l] \ar@{>->}[r] &H_r(X_a, X_{<a}) }$
\newline with the last arrow induced from the long exact sequence of the pair $(X_a, X_{<a}),$  
\item
$\xymatrix{\mathbb T_r (a,b)/\iota \mathbb T_r(<a, b)\ar@{>->}[r] & \coker( H_r(X_{<a})\to H_r(X_a)) \ar@{>->}[r] &H_r(X_a, X_{<a}) } $
with the last arrow induced from the long exact sequence of the pair $(X_a, X_{<a})$ and the first induced  from the inclusions 
$\mathbb T_r(a,b)\subseteq H_r(X_a)$  and $\mathbb T_r(<a,b)\subseteq H_r(X_{<a}),$
\item
$\xymatrix{\mathbb T_{r-1} (a,b)/\mathbb T_{r-1}(a, <b) & \coker( H_r(X_{<b}, X_a)\to H_r(X_b, X_a)) \ar@{->>}[l]\ar@{>->}[r] &H_r(X_b, X_{<b}) } $
\end{enumerate}
with the last arrow induced from the long exact sequence of the triple $(X_a\subseteq X_{<b} \subset X_b)$  and the first derived from 
the diagram 

$$\xymatrix { 
H_r(X_{<b},X_a)\ar@{->>}[d] \ar[r]&H_r(X_b,X_a) \ar@{->>}[r]\ar@{->>}[d]&\coker ( H_r(X_{<b},X_a) \to H_r(X_b,X_a))\ar@{->>}[d] \\
\mathbb T_{r-1}(a,<b)\ar[r]& \mathbb T_{r-1}(a,b)\ar@{->>}[r] &\mathbb T_{r-1}(a,b) / \mathbb T_{r-1}(a,<b)
}$$
The arrow
 $\xymatrix {\ar @{>->}[r]&}$ resp. $\xymatrix {\ar @{->>}[r]&}$ indicates  injective  resp. surjective linear map. 
\vskip .1in
As an immediate consequence one has 

\begin {obs}\label {O43}\

\begin{enumerate} [label=(\roman*)]
\item  $\dim H_r( X^f_a, X^f_{<a}) \geq \dim \mathbb F_r(a,b)/ \mathbb F_r( <a,b)\geq \dim \mathbb F_r (a\times [b, b')\geq \delta^f_r(a,b)$  
is a consequence of 1. above.
\item $\dim H_r( X^f_a, X^f_{<a}) \geq \dim  \coker( H_r(X_{<a})\to H_r(X_a)) \geq \dim \mathbb T_r(a,x)/ \iota \mathbb T_r( <a,x)\geq ^+\omega^f_r(a)$ 
is a consequences of 2. above.
\item  $\dim H_r( X^f_a, X^f_{<a}) \geq \dim \mathbb T^f_r(a,b) / \iota \mathbb T_r( <a,b) \geq \dim \mathbb T_r(a\times (b', b])\geq \dim ^+\hat \gamma_r(a,b)$ 
is a consequence of 2. above.
\item  $\dim H_r( X^f_b, X^f_{<b}) \geq \dim \mathbb T^f_{r-1}(a,b)/ \mathbb T_{r-1}( a,<b) \geq \dim \mathbb T_{r-1}((a', a]\times b) \geq \dim ^+\hat \gamma_{r-1}(a,b)$ 
is a consequence of 3. above.
\item $\dim H_r(X_a, X_{<a}) \geq \dim \coker (H_r(X_{<a} \to H_r(X_a))\geq \dim \mathbb I_a(r)/ \mathbb I_{<a}(r)$  is a consequence of 1. above
and $\dim H_r(X_a, X_{<a}) \geq\dim \mathbb T_{r-1}( <a,a) \geq  
\dim ^+\hat \lambda_{r-1}(a)$ follows from  (\ref{E9}).
\end{enumerate}
\end{obs}

\vskip.2in

{\bf Homological Tameness}
\vskip .1in

\begin{definition} 
\begin{enumerate}
\item
The continuous map $f$ is called {\it homologically tame} if for any $t\in \mathbb R$ there exists $\epsilon (t) >0$ s,t. the inclusion induced linear map $H_r(X_t)\to H_r(X_{t+\epsilon})$ and 
$H_r(X^t)\to H_r(X^{t-\epsilon})$ are isomorphisms for any $\epsilon, 0 < \epsilon <\epsilon(t).$  
\item
The continuous map $f$ is called {\it homologically finite } (better said left-homologically finite) 
if for any $t\in \mathbb R$  $\dim H_r(X_t, X_{<t}) <\infty.$
\end{enumerate}
\end{definition}

All real-valued maps on finite or  infinite dimensional manifolds considered in C.M.T are tame and homologically finite. 

By Observation \ref{O43},\ if $f$ is is homologically finite then   the maps $\delta^f_r ,$  $^+\gamma^f_r,$  $^+\mu_r,$  $^+\lambda^f_r$ and $^+\omega_r$  
 are $\mathbb Z_{\geq 0}-$valued functions with the  same support as $\hat \delta^f_r ,$  $^+\hat \gamma^f_r,$ $^+\hat \mu_r,$  $^+\hat \lambda_r$ and $^+\hat \omega^f_r.$  

\vskip .1in

The tameness  
implies  that the set $CR(f)$ is a discrete subset of $\mathbb R$ and therefore can  be totally ordered as a sequence $\{ \cdots  < c_i <c_{i+1} <\cdots\},$  possibly infinite in both directions,  and the filtration of $(C^{\delta, \gamma, \mu}_\ast , \partial ^{\delta, \gamma, \mu}_\ast)$ is locally constant in $t \in \mathbb R\setminus CR(f).$ If $f$ is in addition bounded from below then the sequence of critical values is bounded from below and each of the sub complexes $(C^{\delta, \gamma, \mu}_\ast (t), \partial ^{\delta, \gamma, \mu}_\ast)$
are chain complexes of finite dimensional  vector spaces, hence the considerations in Section 2 apply. 
 \vskip .1in 

 The following Theorem, whose proof is presented in the next section, establishes the relation between the maps $\hat \delta^f_r, ^+\hat \mu^f_r, ^+\hat \gamma^f_r,  ^+\hat \lambda^f_r$
 and the homology of the pair $(X_a, X_{<a}),$ and of the spaces   $X_a$ and $X.$ 

 \begin {theorem}\label {T44}\
 Suppose $f$ is  homologically finite and tame. Then
 \begin{enumerate}
\item  For $s \in \mathbb R\setminus CR(f),$  one has 
$H_r(X_s, X_{<s})=0$ 

and for $a\in CR(f)$ 

\begin{equation*}
\begin{aligned} 
H_r(X_a, X_{<a})= \bigoplus _{s\in CR(f)} \hat \delta^f_r(a,s) \oplus^+\hat \mu^f_r(a) &\oplus \bigoplus _{s\in (-\infty,a)\cap CR(f)}  {^+\hat \gamma}^f_{r-1}(s, a) \oplus ^+ \hat \lambda^f_{r-1} (a) 
\oplus \\
&\oplus  \bigoplus_{s\in (a, \infty) \cap CR(f) } {^+{\hat \gamma}^f_r(a,s) }
\end{aligned}
\end{equation*}

 \item If in addition $f$ is 
 bounded from below then  for any $t\in \mathbb R$ 
 \begin{enumerate} 
 \item $H_r(X_t)= \bigoplus _{(a,b) \mid a\leq t} \hat \delta^f_r(a,b)  
 \oplus \bigoplus _{(a,b)\mid a\leq t <b} {^+ \hat \gamma^f_r(a,b)} \oplus \bigoplus_{a\leq t}\ ^+\hat \mu ^f_r(a),$
 \item  $H_r(X)= \bigoplus _{(a,b)} \hat \delta^f_r(a,b) + \bigoplus _a \ ^+ \hat \mu_r(a) $ 
\end{enumerate}
with $a,b\in CR(f).$
 \item If in addition $f$ is  
 bounded from below and from above,
in particular if $X$ is compact,  then 
 \begin{enumerate} 
 \item $H_r(X_t)= \bigoplus _{(a,b) \mid a\leq t} \hat \delta^f_r(a,b)  
 \oplus \bigoplus _{(a,b)\mid a\leq t <b} {^+ \hat \gamma^f_r(a,b)} ,$
 \item  $H_r(X)= \bigoplus _{(a,b)} \hat \delta^f_r(a,b)$ 
 \end{enumerate}
 for $t\in \mathbb R$ and $a,b\in CR(f).$
\end{enumerate}
 \end{theorem} 

Note that  since $f$ is bounded from below, situation we meet in C.M.T, 
the sums in item 2(a)  contain only finitely many non vanishing terms.

The proof of this theorem is sketched in  Section 5.

\vskip .1in  

If $f:M\to \mathbb R$ is a proper smooth function on a finite dimensional manifold \footnote {or smooth function which satisfies Palais-Smale condition on an infinite dimensional manifold} with all critical points non degenerate  denote by  $c_r(f,a)$  the number of critical points of of index $r$ with $f(x)=a.$ Then one has:
\begin {proposition} (Morse Lemma) \label {P45}\
 $$c_r(f,a)= \dim H_r(M_a, M_{<a}).$$ 
\end{proposition}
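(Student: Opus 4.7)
The plan is the classical Morse-theoretic handle-attachment argument, which reduces $H_r(M_a, M_{<a})$ to a finite sum of local contributions at the critical points lying on the level $f^{-1}(a)$.

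First, I would use properness of $f$ together with non-degeneracy (hence isolatedness) of the critical points: since $f^{-1}([a-1,a+1])$ is compact and meets the discrete critical set in finitely many points, the set $\operatorname{Crit}(f) \cap f^{-1}(a) = \{x_1, \ldots, x_m\}$ is finite, with Morse indices $k_1, \ldots, k_m$; in particular $a$ admits a punctured neighborhood of regular values in $\mathbb R$. Fix $\epsilon, \epsilon' > 0$ so small that $(a - \epsilon, a) \cup (a, a + \epsilon']$ contains no critical value, and pick a gradient-like vector field $X$ for $f$ as in Section 3. Flowing backwards along $X$ gives a strong deformation retraction of $M_{a + \epsilon'}$ onto $M_a$, and the inclusions $M_{a - \epsilon} \hookrightarrow M_{a - \delta}$ are homotopy equivalences for all $0 < \delta < \epsilon$. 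Combined with $H_*(M_{<a}) = \varinjlim_{\delta \to 0^+} H_*(M_{a - \delta})$ and two invocations of the long exact sequence of a triple, one obtains
$$H_r(M_a, M_{<a}) \;\cong\; H_r(M_{a + \epsilon'}, M_{a - \epsilon}).$$

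Next, I would invoke the Morse lemma at each $x_j$: in Morse coordinates $f - a$ becomes the quadratic form $Q_j(y) = -\tfrac12 \sum_{i \leq k_j} y_i^2 + \tfrac12 \sum_{i > k_j} y_i^2$ on a chart $U_j$, and a deformation pushing along the positive eigendirections exhibits $M_{a + \epsilon'}$, up to homotopy rel $M_{a - \epsilon}$, as $M_{a - \epsilon}$ with a $k_j$-cell attached at each $x_j$ along its boundary sphere $S^{k_j - 1} \subset M_{a - \epsilon}$. The good-pair property (or a direct excision) then yields
$$H_r(M_{a + \epsilon'}, M_{a - \epsilon}) \;\cong\; \bigoplus_{j=1}^m H_r(D^{k_j}, S^{k_j - 1}) \;\cong\; \kappa^{\#\{j : k_j = r\}} = \kappa^{c_r(f,a)},$$
which is the claimed identity.

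The main obstacle is the simultaneous handle-attachment step: one must choose $\epsilon, \epsilon'$ and the chart neighborhoods $U_j$ coherently so that the backward $X$-flow starting from $M_{a + \epsilon'} \setminus \bigsqcup_j U_j$ remains in $M \setminus \operatorname{Crit}(f)$ and lands inside $M_{a - \epsilon}$, thereby isolating the entire topological change into the local chart pieces. This is the technical heart of classical Morse theory, cf.\ \cite{Mi}; once it is in place, the remaining ingredients are the local quadratic-form deformation and a routine assembly via excision.
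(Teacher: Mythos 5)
Your argument is correct, but it takes a genuinely different route from the paper's. You run the classical Milnor machinery: finiteness of critical points on the level, a gradient-like flow giving $M_{a+\epsilon'}\simeq M_a$ and $M_{a-\epsilon}\simeq M_{<a}$, so that $H_r(M_a,M_{<a})\cong H_r(M_{a+\epsilon'},M_{a-\epsilon})$, and then the simultaneous handle-attachment theorem to reduce to $\bigoplus_j H_r(D^{k_j},S^{k_j-1})$. The paper bypasses the flow and the handle decomposition entirely. It sets $M':=M\setminus\{x_1,\dots,x_p\}$ and observes that $M'_a$ is a manifold with boundary whose interior is $M_{<a}$, so $H_\ast(M'_a,M_{<a})=0$ and hence $H_r(M_a,M_{<a})\cong H_r(M_a,M'_a)$; a single excision to the Morse-chart discs $D$ then gives $H_r(D_a,D_a\setminus\{\text{crit.\ pts}\})$, which the local quadratic form computes directly. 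The paper's version is shorter and more self-contained for this one isomorphism: no $\epsilon$'s, no vector field, no reparametrized deformation retraction (the step you flag as "the technical heart"), no relative CW structure. Your version of course proves more (the full handle-attachment statement) and is the one to use if you want the CW structure of $M_a$ rel $M_{<a}$, but for the stated dimension count the paper's excision-only argument is the lighter tool.
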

Proposition (\ref{P45}) remains true if $M$ is an infinite dimensional smooth manifold and $f$ satisfies Palais-Smale condition  C \footnote{ any sequence of points $x_i\in M$ s.t. $d f (x_i)\to 0$ contains a subsequence convergent to  a critical point}. 
Theorem \ref{T44} and Proposition \ref{P45} imply the main result of this paper, Theorem \ref {TP}.
 
 \begin{theorem} \label {TP}\ 
 Suppose $f$ is a proper Morse function bounded from below  on a smooth manifold or  a smooth function on a smooth infinite dimensional manifold which satisfies Palais Smale condition whose critical points are non degenerate.
  Then for any $t$ the chain complexes $(C^{f, X,\mathcal O}_\ast  (t), \partial ^{X. \mathcal O}_\ast)$  and the chain complex $(C^{\delta, \gamma,\mu} (f)_\ast (t), \partial ^{\delta, \gamma, \mu}_\ast )$ are isomorphic. 
 \end{theorem}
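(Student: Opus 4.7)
The plan is to invoke Observation \ref{O21}: under the hypotheses of Theorem \ref{TP} the function $f$ is tame and bounded from below (and satisfies Palais--Smale in the infinite-dimensional case), so both $(C^{f,X,\mathcal{O}}_\ast(t),\partial^{X,\mathcal{O}}_\ast)$ and $(C^{\delta,\gamma,\mu}(f)_\ast(t),\partial^{\delta,\gamma,\mu}_\ast)$ are chain complexes of finite-dimensional $\kappa$-vector spaces. It therefore suffices to check that, in every degree $n$, the two complexes share both the same chain dimension $c_n(t)$ and the same Betti number $\beta_n(t)$; Observation \ref{O21} then furnishes the (non-canonical) isomorphism.

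For the chain dimensions, on the Morse side Proposition \ref{P45} gives
$\dim C^{f,X,\mathcal{O}}_n(t)=\sum_{a\leq t}c_n(f,a)=\sum_{a\leq t}\dim H_n(M_a,M_{<a})$.
On the bar-code side, the block decomposition (\ref{E10}) expresses $\dim C^{\delta,\gamma,\mu}_n(t)$ as a sum of five contributions coming from $C_n^-(t)$, $H_n(t)$, and $C_n^+(t)$. To match the two, I would apply Theorem \ref{T44}(1) at each $a$ (dropping the term $^+\hat\lambda^f_{n-1}(a)$, which vanishes because $f$ is bounded from below), and then sum over all $a\leq t$. A careful reindexing identifies the resulting summands with the blocks in (\ref{E10}): the piece $\sum_{a\leq t,\,s<a}{}^+\gamma^f_{n-1}(s,a)$ reorganizes as $\bigoplus_{a'<b'\leq t}{}^+\hat\gamma^f_{n-1}(a',b')=C_n^-(t)$; the piece $\sum_{a\leq t,\,s>a}{}^+\gamma^f_n(a,s)$ splits according to whether $s\leq t$ (giving $C_n^+(t)$) or $s>t$ (giving the $^+\hat\gamma$-summand inside $H_n(t)$); the $\delta$-- and $\mu$--terms fall directly into the remaining summands of $H_n(t)$. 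Summing then yields $\sum_{a\leq t}\dim H_n(M_a,M_{<a})=\dim C^{\delta,\gamma,\mu}_n(t)$, as required.

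For the Betti numbers, on the Morse side classical Morse theory (Section 3) gives $H_n(C^{f,X,\mathcal{O}}_\ast(t))\cong H_n(M_t)$. On the bar-code side, the boundary in (\ref{E8}) is presented in Hodge normal form from Section 2 with an identity block pairing $C_n^-$ with $C_{n-1}^+$; because both $C_n^-(t)$ and $C_{n-1}^+(t)$ are indexed by the same set $\{(a,b):a<b\leq t\}$, the same identity block survives the restriction to the $t$-subcomplex. Hence the homology of $(C^{\delta,\gamma,\mu}_\ast(t),\partial^{\delta,\gamma,\mu}_\ast)$ is precisely the middle summand $H_n(t)$ of (\ref{E10}). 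Theorem \ref{T44}(2a) then identifies $H_n(t)$ with $H_n(X_t)=H_n(M_t)$, producing the required equality of Betti numbers.

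With both invariants matched in every degree, Observation \ref{O21} delivers the isomorphism claimed by Theorem \ref{TP}. The main technical obstacle is the telescoping in the second paragraph: Theorem \ref{T44}(1) records the bar codes incident to a single critical value $a$, and one must verify that aggregating these incidences over all $a\leq t$ reproduces the block partition (\ref{E10}) exactly, with no double counting and no omitted bar codes (in particular correctly routing each $^+\hat\gamma^f_n(a,b)$ to $C_n^+(t)$ or to the $^+\hat\gamma$-block of $H_n(t)$ depending on whether $b\leq t$ or $b>t$). Once this bookkeeping is in place the rest is routine; note that the construction is silent about filtration compatibility, in line with the remark preceding the theorem that a filtration-preserving isomorphism is a subtler question to be pursued separately.
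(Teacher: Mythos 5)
Your proposal is correct and follows essentially the same route as the paper's proof: invoke Observation \ref{O21} to reduce to matching chain dimensions (via Proposition \ref{P45} on the Morse side and Theorem \ref{T44}(1) on the bar-code side) and Betti numbers (via classical Morse theory and Theorem \ref{T44}(2a)). The extra bookkeeping you supply in the second paragraph --- reindexing the summands of Theorem \ref{T44}(1) over all $a\leq t$ into the blocks of (\ref{E10}) and noting the vanishing of $^+\hat\lambda^f_{n-1}$ --- is exactly what the paper leaves implicit, and you have carried it out correctly.
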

 
 \begin{proof}
 Since both  complexes are complexes of finite dimensional vector spaces,  in view of Observation \ref{O21},  it suffices to check that the dimension of the $r-$components and the dimension of $r-$homology vector spaces are the same.
 
Indeed,
\begin{enumerate}[label= (\alph*)]
\item $\dim C^{f,X,\mathcal O}_r(t),$ which by Proposition \ref{P45} is equal to $\dim (\oplus_{a\leq t} H_r(X_a, X_{<a})),$ as implied by C.M.T.,
is  equal  to  $\dim (C_r^-(t) \oplus H_r(t) \oplus C_r(t)^+)= \dim C_r^{\delta, \gamma, \mu}(f) (t),$ cf. (\ref {E13}) as calculated by Theorem \ref {T44} item 1.
\item 
the dimension of the homology of the chain complex $(C_\ast^{f,X,\mathcal O}(t), \partial ^{X,\mathcal O}_\ast),$ which by  C.M.T is the dimension of $H_r(M_t),$  is  equal  to 
the dimension of the homology of the complex $(C_\ast^{\delta, \gamma, \mu}(t), \partial ^{\delta, \gamma, \mu}_\ast),$ as given by  Theorem \ref{T44} item 2.
\end{enumerate}
 \end{proof}

 \medskip
 One expects that the chain complexes $(C_\ast^{\delta, \gamma, \mu}, \partial ^{\delta, \gamma, \mu}_\ast)$ and $(C_r^{X,\mathcal O}, \partial ^{X,\mathcal O})$ are isomorphic by an isomorphism which preserves the $\mathbb R-$filtration;  this issue will be addressed later, being algebraically more involved.
 
 In particular, all conclusions about relationships between the rest points and instantons (for a vector field which admits a Morse function as Lyapunov) derived via homology,  can be derived from bar codes  and in  considerably more general situations, involving  more general spaces and more general flows.   \vskip ,2in

\section {About proofs} 

 Since in this section we will use direct and inverse limits it will be useful  to  have in mind the following facts. 
\begin {enumerate}
\item 
 (A1) For a direct system of short exact sequences of vector spaces the direct limit remains an exact sequence.
\item 
(A2) For an inverse system of short exact sequences $$\xymatrix{0\ar[r] &\{A_\alpha, ^A\iota_\alpha^{\alpha'} \} \ar[r] &\{B_\alpha, ^B\iota_\alpha^{\alpha'}\} \ar[r] &\{C_\alpha, ^C\iota_\alpha^{\alpha'}\} \ar[r]&0},$$  
by passing to the inverse limit one obtains  the exact sequence 
$$\xymatrix{ 0\ar[r]&\varprojlim_\alpha  A_\alpha \ar[r] &\varprojlim_\alpha  B_\alpha \ar[r] &\varprojlim_\alpha  C_\alpha \ar[r] &\varprojlim'_\alpha  A_\alpha \ar[r] &\cdots}$$
with  $\varprojlim'_\alpha  A_\alpha =0$ if the system $\{A_\alpha, ^A\iota_\alpha^{\alpha'} \}$ satisfies Mittag-Leffler condition, in particular if $^A\iota_\alpha^{\alpha'} $ are surjective or if $^A\iota_\alpha^{\alpha'} $ are injective and $A_\alpha$ are subspaces of a finite dimensional vector space $A.$
\end{enumerate}

\vskip .2in

For any $a, \alpha, \beta \in \mathbb R,  \alpha <\beta$ introduce 
\begin{equation}\label {E14}
\boxed{\begin{aligned}
\mathbb F^f_r(a\times [\alpha, \beta)):= 
 \mathbb F_r(a,\alpha ) / (\mathbb F_r(<a,\alpha) + \mathbb F_r(a, \beta)) .\\
 \end{aligned}} 
 \end{equation}
As shown in \cite{Bu2} one has the following Proposition.

\begin {proposition} \label {P51}\

 For any $a, b_1, b_2,  b_3 \in \mathbb R$ with $b_1 <b_2 <b_3$  one has the obviously induced linear maps $\iota$ and $\pi$ which provide  the short exact sequence 
$$\xymatrix{0 \ar[r] &\mathbb F_r (a\times[b_2, b_3) ) \ar[r]^i&\mathbb F_r (a\times [b_1, b _3)) \ar[r]^\pi &\mathbb F_r (a\times [ b_1, b_2) ) \ar[r]&0 . }$$
\end{proposition}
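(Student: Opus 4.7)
The plan is to unwind the definition of $\mathbb F^f_r(a\times [\alpha,\beta))$ and to construct $i$ and $\pi$ as the maps induced on quotients by the identity map on the ambient space $H_r(X)$, then verify exactness by a short diagram chase. The key structural fact I will exploit throughout is that, because $b_1<b_2<b_3$ gives the inclusions $X_f^{b_3}\subseteq X_f^{b_2}\subseteq X_f^{b_1}$ and hence $\mathbb I_f^{b_3}(r)\subseteq \mathbb I_f^{b_2}(r)\subseteq \mathbb I_f^{b_1}(r)$, we have the nested chain
$$\mathbb F_r(a,b_3)\subseteq \mathbb F_r(a,b_2)\subseteq \mathbb F_r(a,b_1),\qquad \mathbb F_r(<\!a,b_3)\subseteq \mathbb F_r(<\!a,b_2)\subseteq \mathbb F_r(<\!a,b_1).$$

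Once this is in place, I would define $i$ by sending $[x]\in \mathbb F_r(a,b_2)/(\mathbb F_r(<\!a,b_2)+\mathbb F_r(a,b_3))$ to the class of the same element $x$, now viewed inside $\mathbb F_r(a,b_1)/(\mathbb F_r(<\!a,b_1)+\mathbb F_r(a,b_3))$; well-definedness is immediate from the inclusion $\mathbb F_r(<\!a,b_2)\subseteq \mathbb F_r(<\!a,b_1)$. Similarly $\pi$ is defined by sending $[y]\in \mathbb F_r(a,b_1)/(\mathbb F_r(<\!a,b_1)+\mathbb F_r(a,b_3))$ to $[y]\in \mathbb F_r(a,b_1)/(\mathbb F_r(<\!a,b_1)+\mathbb F_r(a,b_2))$; well-definedness follows from $\mathbb F_r(a,b_3)\subseteq \mathbb F_r(a,b_2)$. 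Surjectivity of $\pi$ and $\pi\circ i=0$ are both formal.

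The two substantive verifications are injectivity of $i$ and exactness at the middle term. For injectivity, suppose $x\in\mathbb F_r(a,b_2)$ represents a class with $i([x])=0$, so $x=u+v$ with $u\in\mathbb F_r(<\!a,b_1)$ and $v\in\mathbb F_r(a,b_3)$. Since $v\in\mathbb F_r(a,b_3)\subseteq\mathbb F_r(a,b_2)$ and $x\in\mathbb F_r(a,b_2)$, the difference $u=x-v$ lies in $\mathbb F_r(a,b_2)$, hence in $\mathbb I_f^{b_2}(r)$; combined with $u\in\mathbb I_{<a}^f(r)$ this shows $u\in\mathbb F_r(<\!a,b_2)$, proving $[x]=0$ in the source. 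For exactness at the middle, if $[y]\in\ker\pi$ with representative $y\in\mathbb F_r(a,b_1)$, write $y=u'+v'$ with $u'\in\mathbb F_r(<\!a,b_1)$ and $v'\in\mathbb F_r(a,b_2)$. Then $[y]=[v']$ in the middle quotient, and $[v']$ is by definition in the image of $i$.

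The main potential obstacle is the step in the injectivity argument: one has to move $u$ out of the big group $\mathbb F_r(<\!a,b_1)$ into the smaller group $\mathbb F_r(<\!a,b_2)$, which is only possible because the decomposition $x=u+v$ allows us to combine the constraint $x\in \mathbb I_f^{b_2}(r)$ with $v\in \mathbb I_f^{b_3}(r)\subseteq \mathbb I_f^{b_2}(r)$. This is really where the hypothesis $b_2<b_3$ (so that the quotient $\mathbb F_r(a,b_2)/\mathbb F_r(a,b_3)$ sits cleanly inside the bigger quotient) is used, and it is the one place where a reader could get the set-theoretic bookkeeping wrong. Everything else is a purely formal manipulation of quotients of subspaces of $H_r(X)$.
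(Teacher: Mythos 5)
Your proof is correct. The paper itself does not prove Proposition~\ref{P51}: it refers the reader to \cite{Bu2} for the description of $\iota,\pi$ and the verification, so there is no argument here to compare against. Your direct diagram chase is the natural one, and you correctly identified the only nontrivial step --- injectivity of $i$ --- and supplied the decisive observation: once $x=u+v$ with $x,v\in\mathbb I_f^{b_2}(r)$, the element $u=x-v$ also lies in $\mathbb I_f^{b_2}(r)$, so $u\in\mathbb I^f_{<a}(r)\cap\mathbb I_f^{b_2}(r)=\mathbb F_r(<\!a,b_2)$, which lets you promote $u$ from the larger group $\mathbb F_r(<\!a,b_1)$ to the required $\mathbb F_r(<\!a,b_2)$. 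One minor remark: the structural fact doing the work here is that $\mathbb F_r(<\!a,b_i)=\mathbb F_r(<\!a,b_1)\cap\mathbb F_r(a,b_i)$ for $i=2,3$, which holds because $\mathbb F_r$ is defined as an intersection of a decreasing family in $a$ and a decreasing family in $b$; once one names this explicitly, the whole proposition is an instance of the modular law, with $A=\mathbb F_r(a,b_1)$, $B=\mathbb F_r(a,b_2)$, $C=\mathbb F_r(a,b_3)$, $N=\mathbb F_r(<\!a,b_1)$, since $C\subseteq B$ gives $B\cap(N+C)=(B\cap N)+C$. That reformulation would make the element chase disappear, but your version is complete and correct as written.
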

Unless obvious to the reader, details on the description of $\iota$ and $\pi$  and on the proof of this proposition  can be found in \cite {Bu2}.

In view of this Proposition (\ref{P51}) for any $x', x, y, y',  \ x' < x< y < y' $ one has the commutative diagrams 
$$\xymatrix{ \mathbb F_r(a\times [x',y)) & \mathbb F_r(a\times [x',y')) \ar@{->>}[l]\\\
 \mathbb F_r(a\times[x,y))\ar@{>->}[u]   & \mathbb F_r(a\times[x,y'))\ar@{>->}[u] \ar@{->>}[l], } 
,$$
both cartesian and co-cartesian with the horizontal arrows  surjective and the vertical arrows injective. 
This implies  (exercise left to the reader) the commutation of the direct and inverse limits stated as Observation \ref {O2}, cf  (\cite {Bu3}).
\begin {obs}\label {O2}\
The  diagram above  induces a canonical  isomorphism  
\begin{equation}  \label{E15}
\begin{aligned}
\varinjlim_{x\to -\infty} \varprojlim_{y\to \infty} \mathbb F_r(a \times [x,y))\to \varprojlim_{y\to \infty} \varinjlim_{x\to -\infty} \mathbb F_r(a\times [x,y)).\\
\end{aligned}
\end{equation}
\end{obs}

Similarly,  for any $a , b\in \mathbb R$ and $a \leq  \beta <\gamma$
and $ \alpha <\beta <b$ introduce  
\begin{equation} \label{E16}
\boxed{\begin{aligned}
\mathbb T_r(a\times (\beta,\gamma]):= &\mathbb T_r(a, \gamma)/ (\iota \mathbb T_r(<a,\gamma ) + \mathbb T_r(a, \beta))\\
 \mathbb T_r((\alpha,\beta]\times b):= &\mathbb T_r(\beta, b)/ (\iota \mathbb T_r(\alpha, b) + \mathbb T_r(\beta, <b))
\end{aligned}}
\end{equation}  
and one can verify as in \cite{Bu2}  the follwing, proposition: 
\begin{proposition} \label {P53}\

\begin{enumerate}
\item For any \ $a, b_1, b_2, b_3\in \mathbb R, \   a \leq b_1 <b_2<  b_3$
one has the obviously induced linear maps $\iota$ and $\pi$ which provide the short exact sequence 
$$\xymatrix{ 0\ar[r] &\mathbb T_r (a\times(b_1, b_2])\ar[r]^\iota & \mathbb T_r (a\times(b_1, b_3] )\ar[r]^{\pi}& \mathbb T_r (a\times (b_2, b_3]))\ar[r]&0 .}$$
\item For any $ a_1, a_2, a_3, b \in \mathbb R, \  a_1< a_2 <a_3 <  b$
one has the obviously induced linear maps $\iota$ and $\pi$ which provide the short exact sequence 
$$\xymatrix{ 0\ar[r] &\mathbb T_r ((a_1,a_2] \times b)\ar[r]^\iota & \mathbb T_r ((a_1,a_3]\times b )\ar[r]^\pi& \mathbb T_r ((a_2, a_3] \times b))\ar[r]& 0 .}$$ 
\end{enumerate}
\end{proposition}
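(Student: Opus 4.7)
The plan is to unwind the three quotients from~(\ref{E14}), identify $i$ and $\pi$ as descending from natural kernel inclusions, and verify exactness by two short diagram chases using only the functoriality of the maps $H_r(X_s) \to H_r(X_t)$ induced by inclusion for $s \leq t$. The structure mirrors the proof of Proposition~\ref{P51} for $\mathbb{F}_r$: part~(1) subdivides over the ``kill-time'' $b_i$, while part~(2) subdivides over the ``birth-time'' $a_i$.

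For part~(1), the kernel inclusion $\mathbb{T}_r(a, b_2) \hookrightarrow \mathbb{T}_r(a, b_3)$ (a class killed by $b_2$ is killed by $b_3$) sends $\iota\,\mathbb{T}_r(<a, b_2) + \mathbb{T}_r(a, b_1)$ into $\iota\,\mathbb{T}_r(<a, b_3) + \mathbb{T}_r(a, b_1)$, hence descends to $i$; the identity on $\mathbb{T}_r(a, b_3)$ descends to $\pi$ by enlarging $\mathbb{T}_r(a, b_1)$ to $\mathbb{T}_r(a, b_2)$ in the denominator. Surjectivity of $\pi$, the relation $\pi \circ i = 0$, and the inclusion $\ker \pi \subseteq \img i$ are all immediate. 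The only content is injectivity of $i$, which amounts to
\[
\mathbb{T}_r(a, b_2) \cap \bigl(\iota\,\mathbb{T}_r(<a, b_3) + \mathbb{T}_r(a, b_1)\bigr) \;\subseteq\; \iota\,\mathbb{T}_r(<a, b_2) + \mathbb{T}_r(a, b_1).
\]
If $y = \iota(\tilde u) + z$ lies in the left-hand side with $\tilde u \in \mathbb{T}_r(<a, b_3)$ and $z \in \mathbb{T}_r(a, b_1)$, then $\iota(\tilde u) = y - z \in \mathbb{T}_r(a, b_2)$, so $\tilde u$ already maps to $0$ in $H_r(X_{b_2})$ and thus lies in $\mathbb{T}_r(<a, b_2)$, as required.

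Part~(2) is handled in exactly the same style, using the kernel inclusion $\mathbb{T}_r(a_2, b) \hookrightarrow \mathbb{T}_r(a_3, b)$ induced by $H_r(X_{a_2}) \to H_r(X_{a_3})$. Well-definedness on quotients relies on the factorization
\[
H_r(X_{a_2}) \to H_r(X_{<b}) \;=\; H_r(X_{a_2}) \to H_r(X_{a_3}) \to H_r(X_{<b}),
\]
valid because $a_3 < b$ forces $X_{a_3} \subseteq X_{<b}$; this ensures that $\mathbb{T}_r(a_2, <b)$ lands in $\mathbb{T}_r(a_3, <b)$, and similarly that the image of $\mathbb{T}_r(a_1, b)$ in $\mathbb{T}_r(a_2, b)$ is sent to its image in $\mathbb{T}_r(a_3, b)$. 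Injectivity of $i$ reduces to the following: given $y \in \mathbb{T}_r(a_2, b)$ whose image in $\mathbb{T}_r(a_3, b)$ is $\iota(u) + v$ with $u \in \mathbb{T}_r(a_1, b)$ and $v \in \mathbb{T}_r(a_3, <b)$, lift $u$ to $\tilde u \in \mathbb{T}_r(a_2, b)$ via $H_r(X_{a_1}) \to H_r(X_{a_2})$; then $y - \tilde u \in \mathbb{T}_r(a_2, b)$ has image $v$ in $H_r(X_{a_3})$, and the same factorization forces $y - \tilde u \in \mathbb{T}_r(a_2, <b)$, so $y \in \iota\,\mathbb{T}_r(a_1, b) + \mathbb{T}_r(a_2, <b)$. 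The remaining formal verifications for $\pi$ are immediate.

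The main ``obstacle'' is really only bookkeeping: each injectivity claim is a one-step diagram chase anchored in singular-homology functoriality, and no tameness hypothesis intervenes, in parallel with the analogous statement of Proposition~\ref{P51} for $\mathbb{F}_r$.
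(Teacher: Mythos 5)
Your proof is correct. The paper does not include an argument for Proposition~\ref{P53}; it defers to the cited references, so there is no in-paper proof to compare against. Your direct diagram chase from the quotient definitions~(\ref{E14}), using only functoriality of the inclusion-induced maps on homology (in part~(1) the key point being that $H_r(X_{<a})\to H_r(X_{b_2})$ factors through $H_r(X_a)$, and in part~(2) that $H_r(X_{a_2})\to H_r(X_{<b})$ factors through $H_r(X_{a_3})$), is the natural route. One wording caveat: in part~(2) the map $\mathbb T_r(a_2,b)\to \mathbb T_r(a_3,b)$ induced by $X_{a_2}\subseteq X_{a_3}$ is \emph{not} in general injective, so calling it a ``kernel inclusion'' and writing $\hookrightarrow$ is misleading, and $\tilde u$ is the \emph{image} of $u$ under $H_r(X_{a_1})\to H_r(X_{a_2})$, not a lift. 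Your argument nowhere uses injectivity of that map, so the substance is unaffected; these are purely terminological slips.
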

Details on the description of $\iota$ and $\pi$ and the verifications of this proposition can be found in \cite{Bu2} or \cite {Bu}.

In view of this Proposition (\ref{P53}), for $a< y' < y< x ' <x$  resp. $x' <x <y<y' <b,$ one has the commutative diagrams 
$$\xymatrix{ \mathbb T_r(a\times (y,x]) & \mathbb T_r(a\times (y',x]) \ar@{->>}[l]\\\
 \mathbb T_r(a\times(y,x'])\ar@{>->}[u]   & \mathbb T_r(a\times(y', x'])\ar@{>->}[u] \ar@{->>}[l] } \ \   \xymatrix{ \mathbb T_r((x,y']\times b) & \mathbb T_r((x',y']\times b) \ar@{->>}[l]\\\
 \mathbb T_r((x,y]\times b)\ar@{>->}[u]   & \mathbb T_r((x',y]\times b)\ar@{>->}[u] \ar@{->>}[l] ,}      $$
both cartesian and co-cartesian, whose horizontal arrows are surjections and the vertical arrows are injections. 
As before they imply the commutation of the direct and inverse limit stated below, cf \cite {Bu3}.

\begin{obs} \label {O4}\ 
The diagrams above induce the canonical  isomorphisms  
\begin{equation}\label{E17}
\varprojlim_{x\to a} \varinjlim_{y\to \infty} \mathbb T_r(a \times (x,y])\to \varinjlim_{y\to \infty} \varprojlim_{x\to a} \mathbb T_r(a\times (x,y]), \\
\end{equation}
\begin{equation}\label {E18}
\varinjlim_{y\to b} \varprojlim_{x\to -\infty} \mathbb T_r((x,y]\times b)\to \varprojlim_{x\to -\infty} \varinjlim_{y\to b} \mathbb T_r((x,y]\times b).
\end{equation}
\end{obs}

If one defines $\mathbb F_r(a\times (-\infty, \infty))$ as either one of the two limits (\ref {E15}), in view of (\ref{E14}), one concludes that 
 \begin{equation*} 
\begin{aligned}
\mathbb F_r(a \times (-\infty,\infty)):=&\varprojlim_{y\to \infty} \varinjlim_{x\to -\infty} \mathbb F_r(a \times [x,y)) =
\varprojlim_{y\to \infty}\mathbb I_a(r) / ( \mathbb I_{<a}(r)  + \mathbb I_a(r)\cap  \mathbb I^{y}(r)).
\end{aligned}
\end{equation*}  

By  passing to inverse limit  ($y\to \infty$), in view of A2 and of the finite dimensionality of $\mathbb I_a(r) /  \mathbb I_{<a}(r),$
the exact sequence
$$\xymatrix{0\ar[r]&(\mathbb I_a(r)\cap \mathbb I^{y}(r)) / (\mathbb I_{<a}(r)\cap \mathbb I^{y}(r))\ar[r]  &\mathbb I_a(r) /  \mathbb I_{<a}(r)\ar[r]&\mathbb I_a(r) / ( \mathbb I_{<a}(r)+ \mathbb I_a(r)\cap \mathbb I^{y}(r)) \ar[r]&0}$$
remains exact. One obtains 
$$ 0 \to
\varprojlim_{y\to \infty} (\mathbb I_a(r)\cap \mathbb I^{y}(r)) / (\mathbb I_{<a}(r)\cap \mathbb I^{y}(r))\to  \mathbb I_a(r) /  \mathbb I_{<a}(r)\to  
\varprojlim_{y\to \infty}\mathbb I_a(r) / ( \mathbb I_{<a}(r)+ \mathbb I_a(r)\cap \mathbb I^{y}(r)) 
\to 0.$$
Since  
$$ \varprojlim_{y\to \infty} (\mathbb I_a(r)\cap \mathbb I^{y}(r)/ (\mathbb I_a(r)\cap \mathbb I^{y}(r))\simeq^+\hat \mu_r(a)$$
and 
$$\varprojlim_{y\to \infty}\mathbb I_a(r) / ( \mathbb I_{<a}(r)+ \mathbb I_a(r)\cap \mathbb I^{y}(r))= \mathbb F_r(a\times (-\infty, \infty)$$
one obtains 
\begin{equation}\label{E19}
\mathbb I_a(r) /  \mathbb I_{<a}(r) \simeq  \mathbb F_r(a \times (-\infty,\infty)) \oplus   ^+\hat \mu_r(a).
\end{equation}    
\vskip .1in
 Similarly, 
if one defines $\mathbb T_r ((-\infty, a)\times a)$
 using either one of the limits in (\ref{E18}), for example 
 $$\mathbb T_r ((-\infty, a)\times a)=  \varinjlim_{y\to a} \varprojlim_{x\to -\infty}\mathbb T_r ((x,y]\times a),$$
when $f$ is homologically finite,
one obtains 
 \begin{equation}\label {E20} 
\mathbb T_r(<a,a)= \mathbb T_r((-\infty,a)\times a)  \oplus ^+\lambda^f_r(a).
\end{equation}
Indeed,  by  passing to the inverse inverse limit $(x\to -\infty)$ in the short exact sequence 
$$\xymatrix {0\ar[r] &^a\iota _x^y\mathbb T_r(x,a)/ ^{<a}\iota^y_x \mathbb T_r(x, <a) \ar[r] &\mathbb T_r(y,a)/ \mathbb T_r(y, <a)\ar[r] &\mathbb T_r(y,a)/ (\mathbb T_r(y, <a) +^a\iota_x^y \mathbb T_r(x, a))\ar[r] &0},$$  
since  $\dim \mathbb T_r(y,a)/ \mathbb T_r(y, <a)$ is bounded \footnote { by Observation \ref {O43} item 3    $\dim \mathbb T_r(y,a)/ \mathbb T_r(y, <a)$  is finite and bounded  by $\dim H_r(X_a, X_<a)$}, 
in view of A2 one obtains 
$$\xymatrix {0\ar[r] &^+\lambda^f_r(a)\ar[r] &\mathbb T_r(y,a)/ \mathbb T_r(y, <a)\ar[r] &\varprojlim_{x\to -\infty}  \mathbb T_r(y,a)/ (\mathbb T_r(y, <a) +\iota \mathbb T_r(x, a))\ar[r] &0}$$ 
 and the by passing to the direct limit $(y\to a)$ in view of A1
 one obtains  (\ref{E20}).
\vskip .1in

Similarly, 
 one defines $\mathbb T_r (a\times (a,\infty))$
 using either one of the limits in (\ref{E17}), for example 
 $$\mathbb T_r (a\times (x,y])=  \varinjlim_{y\to \infty} \varprojlim_{x\to a}\mathbb T_r (a\times (x,y]).$$ 
 By passing to the inverse limit $(x\to a )$ in the short exact sequence
$$\xymatrix {0\ar[r] & \mathbb T_r(a,x)/ \iota \mathbb T_r(<a, x ) \ar[r] &\mathbb T_r(a,y)/ \iota \mathbb T_r( <a,y)\ar[r] &\mathbb T_r(a,y)/ (\iota \mathbb T_r( <a,y) +\mathbb T_r(a,x))\ar[r] &0},$$  
because 
$\dim \mathbb T_r(a,x)/ \mathbb T_r( <a,x)< 
\dim H_r(X_a, X_{<a}) <\infty,$ one obtains in view of A2
$$\xymatrix {0\ar[r] &^+\omega^f_r(a)\ar[r] & \mathbb T_r(a,y)/ \mathbb T_r(( <a,y) \ar[r] &\varprojlim_{x\to a} \mathbb T_r(a,y)/ (\mathbb T_r(<a,y) +\iota \mathbb T_r(a,x))\ar[r] &0}.$$  
Then, by  passing to the direct limit $(y \to \infty),$  in view of A1,one obtains
$$
\xymatrix {0\ar[r] &^+\omega^f_r(a)\ar[r] & \mathbb T_r(a,\infty)/\iota  \mathbb T_r(( <a,\infty)\ar[r] &\mathbb T_r(a\times (a,\infty))\ar[r] &0,} 
$$
hence 
\begin{equation} \label {E21}
\mathbb T_r(a,\infty)/\iota  \mathbb T_r(( <a,\infty)\simeq ^+\omega^f_r(a)\oplus \mathbb T_r(a\times (a,\infty)).
\end{equation}

 The commutative diagram $$\xymatrix{
          &0                                &0                            &0 &\\
0\ar[r]&\mathbb I_{<a}(r)\ar[r]\ar[u]&\mathbb I_a(r) \ar[r]\ar[u]&\mathbb I_a(r) /\mathbb I_{<a}(r)\ar[r]\ar[u] &0\\
          & H_r(X_{<a}) \ar[r]\ar[u]&H_r(X_a)\ar[r] \ar[u]& \coker (H_r(X_{<a})\to H_r(X_a))\ar[r]\ar[u]&0\\
          &\mathbb T_r(<a,\infty)\ar[r]\ar[u]&\mathbb T_r(a, \infty)\ar[r] \ar[u]&\mathbb T_r(a, \infty)/  \iota\mathbb T_r(<a, \infty)\ar[r]\ar[u]&0\\
          &0\ar[u]                                       &0\ar[u]                                      &0\ar[u]&}. $$                                                                                                                        

whose  all raws and columns are exact sequences
implies the isomorphism 

\begin{equation} \label{E22}
 \coker (H_r(X_{<a})\to H_r(X_a))\simeq \mathbb I_a(r) /\mathbb I_{<a}(r)\oplus \mathbb T_r(a, \infty)/  \iota\mathbb T_r(<a, \infty).
\end{equation}

The exact homology sequence of the pair $(X_\alpha, X_{<a})$ induces the isomorphism
 \begin{equation} \label {E23}
H_r(X_a, X_{<a})= \coker (H_r(X_{<a}) \to H_r (X_a)) \oplus T_{(r-1)}( <a, a).
\end{equation}

Consequently for $f$ homologically finite  (\ref{E23}),  (\ref{E22}) followed by (\ref {E19}),  and (\ref{E20})  imply 
\begin{equation}\label {E24}
H_r(X_a, X_{<a})\simeq \mathbb F_r(a \times (-\infty,\infty)) \oplus  \mathbb T_r(a\times (a,\infty))
\oplus \mathbb T_{r-1}((-\infty,a)\times a)\oplus  ^+\hat \mu_r(a)\oplus ^+\omega^f_r(a)\oplus  ^+\lambda^f_{r-1}(a)   \end{equation}

\begin {proposition} \label {P55}\
Suppose $\dim H_r( X_a, X_{<a}) <\infty.$ Then: 
\begin{enumerate} 
\item  
There exists a finite set of real numbers $\{x_1, x_2, \cdots x_r\}$ depending on $a$ s.t.with $-\infty <x_1 <x_2 \cdots <x_r  < \infty$  s.t. 
\begin{enumerate} 
\item $\hat \delta^f_r(a,x)=0$ of $x\ne \{ x_1, \cdots x_r\},$
and if in addition $f$ is tame then  
 \item $\mathbb F_r(a\times [x, y))\simeq \bigoplus_{\{i\mid x\leq x_i <y\}}\hat \delta^f_r(a, x_i).$
\end{enumerate}
\item 
There exists a finite set of real numbers $\{y_1, y_2, \cdots y_r\}$ depending on $a$ with $a <y_1 <y_2 \cdots y_r < \infty$ s.t. 
\begin{enumerate} 
\item $^+\hat \gamma ^f_r(a,y)=0$ of $y\ne \{ y_1, \cdots y_r\}$ 
and if in addition $f$ is tame then 
\item $\mathbb T_r(a\times (x, y])\simeq \bigoplus_{\{i\mid x< y_i \leq y\}}  ^+\hat \gamma^f_r(a, y_i).$
\end{enumerate}
\item 
There exists a finite set of real numbers $\{x_1, x_2, \cdots x_k\}$ depending on $a$ with $-\infty <x_1 <x_2 \cdots x_k< a$ s.t. 
\begin{enumerate} 
\item $\hat \delta^f_{r-1}(x, a)=0$ of $x\ne \{x_1, \cdots x_k\}$  
and if in addition $f$ is tame then 
\item $\mathbb T_r((x,y]\times a)\simeq \bigoplus_{\{i\mid -\infty <x_i \leq y\}} ^+\hat \gamma^f_r(x_i,a). $
\end{enumerate}
\end{enumerate}
\end{proposition}

The proof is presented in \cite {Bu2}  but  the for the reader's convenience is also sketched below.

\vskip ,2in

Note that in view of Proposition \ref{P51} the $\mathbb Z-$valued function  
$\dim \mathbb F_r(a\times [x,y)), \ x<y$ increases when $y$ increases to $\infty $ 
or  $x$ decreases  to $-\infty$ and the other variable remains  constant.  

Similarly in view of Proposition \ref{P53} the function $\dim \mathbb T_r(a\times (x,y]),  \ a<x<y$
increases,  when $y$ increases  to $\infty$ 
or when or 
$x$ decreases to  $a$ 
and the other variable remains constant
and   
the function $\dim \mathbb T_r((x,y]\times b), \ x<y<b$ increases  when 
$y$ increases to $b$  
or  when $x$ decreases to $-\infty$ and the other variable remains constant.

Note also that  if $f$ is tame then the inclusion  $\mathbb I_f^y \subseteq \mathbb I_f^{y-\epsilon}$ is an isomorphism for $\epsilon$ small enough and  in view of Proposition \ref{P51},  the function $\dim \mathbb F_r(a\times [x,y))$ is continuous from the left in $x$ and in $y$  when $y$ resp. $x$  remains constant.

Similarly if $f$ is tame then the inclusion  $\mathbb T_r(x,b) \subseteq  T_r(x+\epsilon, b)$ 
is an  isomorphism for $\epsilon$ small enough
and  in view of Proposition \ref{P53} the function $\dim \mathbb T_r((x,y] \times b)$ is continuous from the right in $y$  and in $x$  when $x$ resp. $y$ remains constant. 

Also  if $f$ is tame then the inclusion  $\mathbb T_r(a, y)\subseteq  T_r(a, y+\epsilon)$  
 is an isomorphism for $\epsilon$ small enough
 in view of Proposition \ref{P53} the function $\dim \mathbb T_r(a\times (x,y])$ is continuous from the right in both variables $x$ and $y$  when $y$ resp. $x$ remains constant.
\vskip .1in

Then 
the jumps at $y$  for the first two functions  and  at $x$ for the third function, when appear, in view of Propositions \ref{P51} and \ref{P53} are given by 
\begin{equation}\label{E25}
\begin{aligned}  
\lim _{\epsilon \to 0}  (\dim \mathbb F_r(a\times [x, y+\epsilon))-  \dim \mathbb F_r(a\times [x, y)))=& \lim _{\epsilon \to 0}  \dim \mathbb F_r(a\times [y, y+\epsilon)= \delta^f_r(a, y),\\
\lim _{\epsilon \to 0}  (\dim \mathbb T_r(a\times (x, y)-  \dim \mathbb T_r(a\times (x, y-\epsilon]))= &\lim _{\epsilon \to 0}  \dim \mathbb T_r(a\times (y-\epsilon, y])=^+ \gamma^f_r(a, y),\\
\lim _{\epsilon \to 0}  (\dim \mathbb T_r ((x-\epsilon, y] \times a )-  \dim \mathbb T_r((x, y] \times a))= &\lim _{\epsilon \to 0}  \dim \mathbb T_r((x-\epsilon, x] \times a )= ^+\gamma^f_r(x,a).
\end{aligned}
\end{equation}
\vskip .1in

In view of Propositions (\ref{P51}) and (\ref{P53}) and of tameness of $f$ observe that:
\begin{enumerate}
\item $dim \mathbb F_r(a\times [x,y))$ = 0 for $x$ very large towards $\infty$, or for $y<-K$ with  $K$ large enough  towards $\infty ,$ 
 
\item $dim \mathbb T_r(a\times (x,y])$ = 0 for $x$ large enough towards $\infty$ or for $x$ closed enough  to $a$   
 
\item $dim \mathbb T_r((x,y]\times a)$ = 0 for $y$ closed enough to $a$  or for $x <-K$ with $K$ large enough  towards $\infty.$  
\end{enumerate}
 and  then

\begin{equation}\label {E26}
\mathbb \mathbb F_r(a\times (-\infty, \infty))= \bigoplus_{t\in \mathbb R } \hat \delta_r(a, t) 
\end{equation}

\begin{equation}\label {E27}
 \mathbb T_r (a\times (a, \beta] )= \bigoplus _{t\mid a < t \leq \beta}  \ ^+\hat \gamma^f_r(a,t),   a <\beta \in \mathbb R
\end{equation}

\begin{equation}\label {E28}
\mathbb T_r(a\times (\beta,\infty))= \varinjlim _{y\to \infty} \mathbb T_r(a\times (\beta, y])=\oplus_{ s>\beta} \ ^+ \hat \gamma^f_r(a, s)
\end{equation} 

\begin{equation}\label {E29}
\mathbb \mathbb T_{r-1} ((-\infty,a)\times a)= \bigoplus_{t\in (-\infty, a)} 
\hat \gamma_{r-1} (t, a) 
\end{equation}

{\bf Proof of Theorem (\ref {T44}) }
\vskip .1in 
\begin{proof} 
First recall that : 
\begin{enumerate} 
\item If $f$ is tame then 
$^+\hat \omega^f_r(a)=0,$ 
\item If $f$ is bounded from below then $^+\hat \lambda^f_r(a)=0,$
\item If $f$ is bounded from above  then $^+\hat \mu^f_r(a)=0.$
\end{enumerate}

Then Item 1. in Theorem \ref{T44} follows on the nose from (\ref {E24}), (\ref {E26}),(\ref {E28}), (\ref {E29}) and (1.) above.

Note that since   $f$ is tame and bounded from below,  for any $a$ there are only finitely many critical values smaller than $a,$  say $c_1, c_2, \cdots c_k\leq a,$  hence a finite numbers of values $s,$ $s\leq a,$ such that 
 the vector space $\mathbb I_s/ \mathbb I_{<s}$ is not trivial and all these vector spaces are of finite dimension. Then $\mathbb I_a(r) = \oplus _{s\leq a}
\mathbb I_s/ \mathbb I_{<s}$ and  then, in view of (\ref{E19}) and (\ref{E26}) 

\begin{equation}\label {E30}
\mathbb I_a(r)= \oplus_{s\leq a}\mathbb I_s(r) / \mathbb I_{<s}(r)= \bigoplus _{s\leq a, t\in \mathbb R} \hat \delta^f_r(s,t) \oplus \bigoplus_{s \leq a}\  ^+\hat \mu^f_r(s)
.\end{equation}
To check  Item 2. Theorem \ref{T44}  note  that $H_r(X_a)= \mathbb I_a(r)\oplus \mathbb T_r(a,\infty)$ 
and in order to calculate  $\mathbb T_r(a,\infty) $ observe the following: 
\begin{enumerate} [label= (\roman*)]

\item Since $f$ is tame  in view of (\ref{E28}) one has 
\begin{equation*}
\mathbb T_r(a\times (\beta,\infty))= \varinjlim _{y\to \infty} \mathbb T_r(a\times (\beta, y])=\oplus_{ s>\beta} \ ^+ \hat \gamma^f_r(a, s).
\end{equation*} 
\item Since $f$ is  also  bounded from below for any $a$ the set of critical values smaller or equal to $a$ is finite.  Then let $c_1 <c_2 <\cdots c_k \leq a$ be these critical values and 
observe  that 
in view of Proposition \ref {P55} item 2  the set 
$\bigcup _{1\leq i \leq k} \supp \ ^+\hat \gamma_r \cap (c_i \times (c_i, \infty)) \subset \mathbb R^2_+$ 
is finite.
Then  $\mathbb T_r(a,\infty)= \bigoplus _{1\leq i\leq k} \mathbb T_r (c_i\times (c_i,\infty)) \footnote {because $\mathbb T_r (a\times (a,\infty))= \mathbb T_r(a,\infty)/ \mathbb T_r(<a,\infty)$} $  and 
 in view of (\ref{E28}) we have  
 \begin{equation}\label {E31}
\mathbb T_r(a,\infty)= \bigoplus _{1\leq i\leq k} \mathbb T_r (c_i\times (c_i,\infty)) = \bigoplus _{1\leq i\leq k} ( \bigoplus_ {c_i<s} \ ^+ \hat \gamma^f_r(c_i, s) = \bigoplus _{(t,s) \mid t\leq a} \ ^+\hat \gamma^f_r(t, s).
\end{equation} 
\end{enumerate}

Combining (\ref{E30}) and (\ref{E31}) one completes  the proof of part (a) in Item 2. of Theorem \ref{T44}. To prove  part (b) one pass to the direct limit when $a\to \infty.$replaces $X$ by $X_b$ and $\infty$  by $b$ and one repeats the arguments.

Item 3 of Theorem \ref{T44} follows from Item 2  and 2. and 3. above. 

\end{proof}
\vskip .1in

{\bf Proof of Proposition \ref{P45}}

For each critical point $x_1, x_2, \cdots x_p$ located on the level $f^{-1} (a)$ choose a Morse chart  for $f$ 
and choose $\epsilon$ small enough such that the discs of radius $\epsilon$  (in these charts) provide closed disjoint neighborhoods of the critical points.  Denote each such neighborhood by $D(i)$  with  $x_i\in  D(i)$ and let $D=\sqcup D(i).$  The set of critical points $\{x_1, x_2, \cdots x_p\}$ is contained in the interior of $D.$ Let $r$ be the number of critical points of index $r;$  clearly $r\leq p.$  Let $M':= M\setminus \{x_1, \cdots x_p\}$ and observe that $M'_a$ is a manifold with boundary whose interior is $M_{<a},$ hence  $H_r( M'_{a}, M_{<a})=0$ for any $r.$ Then $H_r (M_a, M_{<a})= H_r( M_a, M'_{a})$ which by excision is isomorphic to $H_r( D_a, D_a\setminus \{x_{i_1}, \cdots x_{i_r}\})$, $x_{i_1}. \cdots x_{i_r}$ the critical points of index $r,$  $D_a= f^{-1} (-\infty, a])\cap D$, which by Morse lemma is isomorphic to the direct sum of $r$ copies of $H_r(D^r, \partial D^r)=\kappa.$ 

\vskip .2in

\end{document}